\documentclass[a4paper,10pt]{amsart}
\usepackage[utf8]{inputenc}

\usepackage{amsmath}
\usepackage{amsthm}
\usepackage{amssymb}

\usepackage{mathrsfs}
\usepackage[all]{xy}

\newcommand{\PP}{\mathcal{P}}

\newcommand{\set}[2]{\left\{ #1 \mid #2 \right\}}

\newcommand{\TT}{T}
\newcommand{\pp}{\overline{p}}
\newcommand{\nn}{m}
\newcommand{\sgn}{\operatorname{sgn}}

\newcommand{\zetaL}{T}
\newcommand{\zetaLsym}{T^\Sigma}
\newcommand{\zetaA}{S}
\newcommand{\zetaAsym}{S^\Sigma}

\newcommand{\LL}{ \mathsf L}
\newcommand{\Ahat}{\hat{\mathsf{A}}}
\newcommand{\pt}{ \mathsf p}
\newcommand{\hh}{h}

\title{Hirzebruch $L$-polynomials and multiple zeta values}

\author{Alexander Berglund}
\author{Jonas Bergstr\"om}

\address{Department of Mathematics\\
Stockholm University\\
SE-106 91 Stockholm\\
Sweden}
\email{alexb@math.su.se}
\email{jonasb@math.su.se}

\newtheorem{theorem}{Theorem}

\newtheorem{corollary}[theorem]{Corollary}
\newtheorem{lemma}[theorem]{Lemma}

\theoremstyle{definition}

\begin{document}
\begin{abstract}
We express the coefficients of the Hirzebruch $L$-polynomials in terms of certain alternating multiple zeta values. In particular, we show that every monomial in the Pontryagin classes appears with a non-zero coefficient, with the expected sign. Similar results hold for the polynomials associated to the $\hat{A}$-genus.
\end{abstract}

\maketitle

\section{Introduction}
The Hirzebruch $L$-polynomials are certain polynomials with rational coefficients,
\begin{align*}
\LL_1 & = \frac{1}{3} \pt_1, \\
\LL_2 & = \frac{1}{45} \big( 7\pt_2 -\pt_1^2 \big), \\
\LL_3 & = \frac{1}{945} \big( 62 \pt_3 -13\pt_2\pt_1 +2\pt_1^3 \big), \\
\vdots
\end{align*}
featured in the Hirzebruch signature theorem, which expresses the signature $\sigma(M)$ of a smooth compact oriented manifold $M^{4k}$ as
$$\sigma(M) = \langle \LL_k, [M] \rangle,$$
where $\pt_i$ are taken to be the Pontryagin classes of the tangent bundle of $M$, see \cite[Theorem 8.2.2]{Hirzebruch} or \cite[Theorem 19.4]{MS}. The $k$th polynomial has the form
$$\LL_k = \LL_k(\pt_1,\ldots,\pt_k) = \sum \hh_{j_1,\ldots,j_r} \pt_{j_1}\cdots \pt_{j_r},$$
where the sum is over all partitions $(j_1,\ldots,j_r)$ of $k$, i.e., sequences of integers $j_1\geq \cdots \geq j_r  \geq 1$
such that $j_1+\cdots + j_r =k$.
The purpose of this note is to establish certain properties of the coefficients $\hh_{j_1,\ldots,j_r}$.

For real numbers $s_1,\ldots,s_r>1$, we define the series
$$\zetaL(s_1,\ldots,s_r) = \sum_{n_1\geq_2 \cdots \geq_2 n_r\geq 1} \frac{(-1)^{n_1+\cdots + n_r}}{n_1^{s_1}\cdots n_r^{s_r}},$$
where $n\geq_2 m$ means ``$n\geq m$ with equality only if $n$ is even''. Define the symmetrization of this series by
$$\zetaLsym(s_1,\ldots, s_r) = \sum_{\sigma\in \Sigma_r} \zetaL(s_{\sigma_1},\ldots,s_{\sigma_r}),$$
where $\Sigma_r$ is the symmetric group.

\begin{theorem} \label{thm:main}
The coefficients of the Hirzebruch $L$-polynomials are given by
$$\hh_{j_1,\ldots,j_r} = \frac{(-1)^r}{\alpha_1!\cdots \alpha_k!}  \frac{2^{2k}}{\pi^{2k}} \zetaLsym(2j_1,\ldots,2 j_r),$$
where $\alpha_\ell$ counts how many of $j_1,\ldots,j_r$ are equal to $\ell$.
\end{theorem}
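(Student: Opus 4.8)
The plan is to realise the entire $\LL$-class as an explicit infinite product indexed by the integers and then to read off $\hh_{j_1,\ldots,j_r}$ as the coefficient of a monomial in the Pontryagin classes. Recall from the theory of multiplicative sequences that $\LL=\sum_k\LL_k$ is associated to the power series $\sqrt u/\tanh\sqrt u$: writing $\pt_i=e_i(u_1,u_2,\ldots)$ for the elementary symmetric functions in formal variables $u_i=x_i^2$, one has $\LL=\prod_i g(u_i)$ with $g(u)=\sqrt u/\tanh\sqrt u$. Since the $\pt_k$ are algebraically independent, $\hh_{j_1,\ldots,j_r}$ is exactly the coefficient of $\pt_1^{\alpha_1}\cdots\pt_k^{\alpha_k}$ when $\prod_i g(u_i)$ is expanded as a polynomial in the $\pt_k$, and the whole task is to compute this coefficient.

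First I would record the product expansion
$$g(u)=\prod_{n\geq 1}\Big(1+\frac{4u}{n^2\pi^2}\Big)^{(-1)^{n-1}},$$
which follows by dividing the Weierstrass product for $\cosh\sqrt u$, whose factors involve the half-integers (the odd $n$), by that for $\sinh\sqrt u/\sqrt u$, whose factors involve the integers (the even $n$), and repackaging the two products with alternating exponents. This is the step that injects the sign $(-1)^{n-1}$. Already the single-variable case $r=1$ recovers the classical evaluation $\hh_{k}=-\tfrac{2^{2k}}{\pi^{2k}}\zetaL(2k)$, in agreement with the Bernoulli-number formula for $\LL_k$.

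Next I would set $\phi_n=\prod_i\big(1+\tfrac{4u_i}{n^2\pi^2}\big)=\sum_{k\geq 0}(4/n^2\pi^2)^k\pt_k$, so that $\prod_i g(u_i)=\prod_{n\geq 1}\phi_n^{(-1)^{n-1}}$, and extract the coefficient of $\pt_1^{\alpha_1}\cdots\pt_k^{\alpha_k}$ by expanding the product over $n$. The decisive structural point is the asymmetry between the two parities: a factor $\phi_n$ with $n$ odd enters to the first power and is \emph{linear} in the $\pt_k$, so it can supply at most one Pontryagin factor, whereas a factor $\phi_n^{-1}$ with $n$ even is a geometric series and may supply arbitrarily many. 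Assigning to the $s$-th Pontryagin factor $\pt_{j_s}$ the index $n_s$ of the $\phi$-factor it came from, this asymmetry says precisely that two factors may share the same index only when that index is even --- which is exactly the relation ``$n\geq_2 m$''.

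Finally I would carry out the bookkeeping. Each factor contributes a power of $4/n^2\pi^2$, whose product over all factors yields the prefactor $2^{2k}/\pi^{2k}$ together with the monomial $\prod_s n_s^{-2j_s}$; the sign contributed per assigned factor is $(-1)^{n_s-1}$, whose product is $(-1)^r(-1)^{n_1+\cdots+n_r}$; and the inverse geometric series at even $n$ produce multinomial coefficients. Passing from the unlabelled coefficient to a sum over labelled assignments $\nu\colon\{1,\ldots,r\}\to\ZZ_{\geq 1}$ introduces the factor $\alpha_1!\cdots\alpha_k!$, and the multinomials reorganise into the number of orderings $\sigma\in\Sigma_r$ that make $\nu$ weakly decreasing with strict descents forced at odd values. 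I expect the main obstacle to be precisely this combinatorial identification: one must verify that for a fixed $\nu$ the number of compatible orderings equals $\prod_{v\ \mathrm{even}}m_v!$, where $m_v=\#\{s:\nu_s=v\}$ (and vanishes if some odd value is repeated), and that these factorials cancel the multinomials so that the labelled sum collapses to $\zetaLsym(2j_1,\ldots,2j_r)$. Combining the pieces gives $\hh_{j_1,\ldots,j_r}=\frac{(-1)^r}{\alpha_1!\cdots\alpha_k!}\,\frac{2^{2k}}{\pi^{2k}}\,\zetaLsym(2j_1,\ldots,2j_r)$, as claimed.
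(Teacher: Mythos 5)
Your approach is correct, and it is genuinely different from the one in the paper. You expand the infinite product $\prod_i\prod_{n\ge1}\bigl(1+\tfrac{4u_i}{n^2\pi^2}\bigr)^{(-1)^{n-1}}$ directly and read off the coefficient of $\pt_1^{\alpha_1}\cdots\pt_k^{\alpha_k}$; the asymmetry you isolate (odd-indexed factors are linear in the $\pt_j$, even-indexed inverse factors are geometric series) is exactly what produces the relation $\ge_2$, and I have checked that your final bookkeeping closes up: the multinomial coefficients from $\phi_n^{-1}$, the factor $\prod_v \alpha_v!/\prod_{n,v}m_{n,v}!$ from passing to labelled assignments, and the overcounting $\prod_n m_n!$ inherent in $\zetaLsym$ combine to give precisely $\frac{(-1)^r}{\alpha_1!\cdots\alpha_k!}\frac{2^{2k}}{\pi^{2k}}\zetaLsym(2j_1,\ldots,2j_r)$. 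The paper instead factors the argument through three intermediate results: a closed formula expressing $\hh_{j_1,\ldots,j_r}$ in terms of products $\hh_{k_1}\cdots\hh_{k_\ell}$ (Theorem~\ref{thm:multiplicative genus}, proved via the expansion of monomial symmetric functions in power sums and M\"obius inversion on the partition lattice), the identity $\hh_k=\tfrac{2^{2k}}{\pi^{2k}}\zeta^*(2k)$, and a Hoffman-type formula (Theorem~\ref{thm:multiple eta}, via Theorem~\ref{thm:Tp}) converting the resulting partition sum of products of $\zeta^*$-values into $\zetaLsym$. Your route is more direct and self-contained for the $\LL$-genus specifically, at the cost of having to justify the term-by-term rearrangement of the multiple series (this is routine --- everything is dominated by $r!\prod_s\zeta(2j_s)$ since each $2j_s\ge 2$ --- but should be said explicitly) and of redoing the analogous product expansion if one wants the $\hat A$-genus; the paper's route buys reusable general statements (Theorems~\ref{thm:multiplicative genus} and~\ref{thm:Tp}) that also yield Theorem~\ref{thm:A-hat} and a new proof of Hoffman's formula. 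To make your sketch a complete proof you would only need to write out the "main obstacle" paragraph carefully --- the count $\prod_{v\ \mathrm{even}}m_v!$ of compatible orderings is correct, but note that it does not literally cancel the multinomials; rather it matches the overcounting built into the symmetrization $\zetaLsym$, while the multinomials are absorbed by the relabelling factor.
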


It is well-known that $\hh_k$ is positive for all $k$. In \cite[Appendix A]{Weiss}, it is argued that $\hh_{i,j}$ is always negative and that $\hh_{i,j,k}$ is always positive (following an argument attributed to Galatius in the case of $\hh_{i,j}$), and it is asked whether it has been proved in general that $(-1)^{r-1}\hh_{j_1,\ldots,j_r}$ is positive. We have not been able to locate such a result in the literature, but we can prove it using our formula.
It follows from the following result.

\begin{theorem} \label{thm:positivity}
For all real $s_1,\ldots,s_r>1$,
$$\zetaL(s_1,\ldots,s_r) <  0.$$
\end{theorem}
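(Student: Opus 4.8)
The plan is to reduce Theorem~\ref{thm:positivity} to a sign statement for an elementary generating function, and then to prove that statement by an induction with a strengthened hypothesis.

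First I would pass from the series to a generating function using the Mellin representation $n^{-s}=\Gamma(s)^{-1}\int_0^\infty t^{s-1}e^{-nt}\,dt$. Applying this to each factor and interchanging summation and integration (legitimate by absolute convergence, since $\sum \prod_i n_i^{-s_i}\le \prod_i\zeta(s_i)<\infty$ dominates the integrand) gives
\[ T(s_1,\ldots,s_r) = \frac{1}{\Gamma(s_1)\cdots\Gamma(s_r)}\int_{(0,\infty)^r}\Big(\prod_{i=1}^r t_i^{\,s_i-1}\Big)\,G\big(-e^{-t_1},\ldots,-e^{-t_r}\big)\,dt_1\cdots dt_r, \]
where $G(x_1,\ldots,x_r)=\sum_{n_1\geq_2\cdots\geq_2 n_r\geq 1} x_1^{n_1}\cdots x_r^{n_r}$ converges for $|x_i|<1$. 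Since $t_i^{\,s_i-1}>0$, $\Gamma(s_i)>0$, and $-e^{-t_i}\in(-1,0)$, it suffices to show that $G(x_1,\ldots,x_r)<0$ for all $x_1,\ldots,x_r\in(-1,0)$.

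To obtain a recursion I would peel off the largest variable $n_1$. Splitting its allowed range into $\{n_1>n_2\}$ and the diagonal $n_1=n_2$ (permitted only when $n_2$ is even, which I extract via $\sum_{n\text{ even}}=\tfrac12\sum_n(1+(-1)^n)$) yields
\[ G(y,x_2,\ldots,x_r)=\frac{1+y}{2(1-y)}\,G(yx_2,x_3,\ldots,x_r)+\frac12\,G(-yx_2,x_3,\ldots,x_r). \]
Here lies the main obstacle: since $x_2<0$ we have $yx_2>0$, so the reduced arguments escape the cube $(-1,0)^{r-1}$ and a naive induction on ``$G<0$ on $(-1,0)^{r-1}$'' cannot close.

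The remedy is to control $G$ on the whole interval $(-1,1)$ in its first slot. I would prove, by induction on $r\ge 2$ and uniformly in the tail $x_2,\ldots,x_r\in(-1,0)$, that $D_r(y):=(1-y)\,G(y,x_2,\ldots,x_r)$ is an \emph{even} function of $y$ with $D_r(y)\le 0$ on $(-1,1)$, strict for $y\neq 0$. Evenness collapses the two terms of the recursion, and the identity $\tfrac{1+y}{1-yx_2}+\tfrac{1-y}{1+yx_2}=\tfrac{2(1+y^2x_2)}{1-y^2x_2^2}$ gives the clean step
\[ D_r(y)=\frac{1+y^2x_2}{1-y^2x_2^2}\;D_{r-1}(yx_2). \]
Because $|yx_2|<1$ and $x_2\in(-1,0)$, both $1+y^2x_2$ and $1-y^2x_2^2$ are positive, so the step preserves evenness and non-positivity; the base case is the explicit $D_2(y)=\frac{x_2(1+x_2)\,y^2}{1-y^2x_2^2}\le 0$. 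Evaluating at $y=x_1\in(-1,0)$ then gives $G(x_1,\ldots,x_r)=D_r(x_1)/(1-x_1)<0$ (the case $r=1$ being the immediate $G(x_1)=x_1/(1-x_1)<0$), and the integral formula from the first step yields $T(s_1,\ldots,s_r)<0$. I expect the crux to be the identification of this strengthened invariant---evenness together with global non-positivity on $(-1,1)$---which is exactly what makes the recursion self-reproducing; once it is in place the remaining computations are routine.
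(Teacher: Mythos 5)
Your argument is correct, but it takes a genuinely different route from the paper's. The paper stays entirely inside the world of series: it introduces the tail sums $T_{2k}(s_1,\ldots,s_r)=\sum_{n_1\geq_2\cdots\geq_2 n_r\geq 2k}(-1)^{n_1+\cdots+n_r}n_1^{-s_1}\cdots n_r^{-s_r}$, shows by induction on $r$ --- via an identity that regroups the terms into brackets of the form $\frac{1}{(2\ell)^s}-\frac{1}{(2\ell+1)^s}$, each of a definite sign --- that these are positive, and then writes $T(s_1,\ldots,s_r)$ as a sum of negative multiples of them. You instead push the problem through the Mellin representation to the generating function $G$ on $(-1,0)^r$ and run a recursion obtained by summing out the top index. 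I verified your recursion $G(y,x_2,\ldots,x_r)=\frac{1+y}{2(1-y)}G(yx_2,x_3,\ldots,x_r)+\frac12 G(-yx_2,x_3,\ldots,x_r)$, the induction step $D_r(y)=\frac{1+y^2x_2}{1-y^2x_2^2}D_{r-1}(yx_2)$ (which indeed requires the evenness of $D_{r-1}$ to collapse the two terms), and the base case $D_2(y)=\frac{x_2(1+x_2)y^2}{1-y^2x_2^2}$; all are correct, and your diagnosis that the naive induction fails because $yx_2>0$ forces one to control the first slot on all of $(-1,1)$ --- repaired by the strengthened invariant of evenness plus non-positivity --- is exactly the right point and does close the induction. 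What your route buys is an explicit product formula: iterating the recursion exhibits $G(x_1,\ldots,x_r)$ as $\frac{1}{1-x_1}$ times a product of manifestly positive rational factors times $D_2$, so the sign is visible at a glance. What the paper's route buys is elementarity: no integral representation or interchange of sum and integral is needed, only rearrangement of an absolutely convergent series. Both are complete proofs.
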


\begin{corollary} \label{cor:main}
The coefficient $\hh_{j_1,\ldots,j_r}$ in the Hirzebruch $L$-polynomial $\LL_k$ is non-zero for every partition $(j_1,\ldots,j_r)$ of $k$. It is negative if $r$ is even and positive if $r$ is odd.
\end{corollary}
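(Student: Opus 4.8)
The plan is to assemble Corollary~\ref{cor:main} directly from the two preceding theorems; essentially no new computation is required beyond tracking signs. First I would invoke Theorem~\ref{thm:main} to write
$$\hh_{j_1,\ldots,j_r} = \frac{(-1)^r}{\alpha_1!\cdots \alpha_k!}\, \frac{2^{2k}}{\pi^{2k}}\, \zetaLsym(2j_1,\ldots,2j_r),$$
and observe that the scalar prefactor $\frac{1}{\alpha_1!\cdots \alpha_k!}\cdot\frac{2^{2k}}{\pi^{2k}}$ is manifestly positive, so that the sign of $\hh_{j_1,\ldots,j_r}$ is entirely controlled by the product $(-1)^r\cdot\sgn\big(\zetaLsym(2j_1,\ldots,2j_r)\big)$.

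Next I would pin down the sign of the symmetrized series. Since $(j_1,\ldots,j_r)$ is a partition of $k$, every part satisfies $j_i\geq 1$, hence each argument obeys $2j_i\geq 2>1$, and this remains true after any permutation of the arguments. Therefore Theorem~\ref{thm:positivity} applies to each summand in the defining expression
$$\zetaLsym(2j_1,\ldots,2j_r) = \sum_{\sigma\in \Sigma_r} \zetaL(2j_{\sigma_1},\ldots,2j_{\sigma_r}),$$
yielding $\zetaL(2j_{\sigma_1},\ldots,2j_{\sigma_r})<0$ for every $\sigma\in\Sigma_r$. A sum of $r!>0$ strictly negative terms is strictly negative, so $\zetaLsym(2j_1,\ldots,2j_r)<0$; in particular it is non-zero.

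Combining the two observations, the sign of $\hh_{j_1,\ldots,j_r}$ equals $(-1)^r\cdot(-1)=(-1)^{r+1}$ and its magnitude is positive, so the coefficient is non-zero for every partition of $k$. Reading off the parity then gives the statement: $\hh_{j_1,\ldots,j_r}$ is negative when $r$ is even and positive when $r$ is odd. I do not expect any genuine obstacle at this stage, since the only substantive input is the strict negativity furnished by Theorem~\ref{thm:positivity}, and the corollary is a direct sign-bookkeeping consequence of it together with the closed formula of Theorem~\ref{thm:main}. The one point warranting a line of care is checking that the positivity hypothesis $s_i>1$ is met by \emph{all} permuted argument tuples, which holds because doubling a part of a partition never produces a value $\leq 1$.
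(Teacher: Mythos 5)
Your proposal is correct and follows exactly the route the paper intends: the corollary is stated as an immediate consequence of Theorem~\ref{thm:main} and Theorem~\ref{thm:positivity}, obtained by noting that the prefactor is positive and that the symmetrized series is a sum of $r!$ strictly negative terms. Your sign bookkeeping, including the check that all permuted arguments satisfy $2j_i\geq 2>1$, is accurate and complete.
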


It is remarked in \cite{Weiss} that a similar pattern has been observed in the multiplicative sequence of polynomials associated with the $\hat{A}$-genus. The polynomials in question are
\begin{align*}
\Ahat_1 & = - \frac{1}{24} \pt_1, \\
\Ahat_2 & = \frac{1}{5760} \big( - 4\pt_2  + 7 \pt_1^2 \big), \\
\Ahat_3 & = \frac{1}{967680} \big( -16 \pt_3 +44\pt_2\pt_1 -31\pt_1^3 \big), \\
\vdots
\end{align*}
These can be treated similarly. Let us write
$$\Ahat_k = \sum a_{j_1,\ldots,j_r} \pt_{j_1} \cdots \pt_{j_r},$$
where the sum is over all partitions $(j_1,\ldots,j_r)$ of $k$. Consider the series
$$\zetaA(s_1,\ldots,s_r) = \sum_{n_1\geq \cdots \geq n_r \geq 1} \frac{1}{n_1^{s_1} \cdots n_r^{s_r}},$$
and its symmetrization
$$\zetaAsym(s_1,\ldots, s_r) = \sum_{\sigma\in\Sigma_r} \zetaA(s_{\sigma_1},\ldots,s_{\sigma_r}).$$
\begin{theorem} \label{thm:A-hat}
The coefficients of the $\hat{A}$-polynomials are given by
$$a_{j_1,\ldots,j_r} = \frac{(-1)^r}{\alpha_1! \cdots \alpha_k!} \frac{1}{(2\pi)^{2k}} \zetaAsym(2j_1,\ldots,2 j_r).$$
In particular, the coefficient $a_{j_1,\ldots,j_r}$ is negative if $r$ is odd and positive if $r$ is even.
\end{theorem}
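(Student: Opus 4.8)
The plan is to follow the proof of Theorem~\ref{thm:main} \emph{mutatis mutandis}, replacing the characteristic power series of the $L$-genus by that of the $\Ahat$-genus. Recall that $\Ahat_k$ is the degree-$k$ part, in the variables $t_i = x_i^2$, of the total class $F := \prod_i Q(t_i)$, where $Q(t) = \frac{\sqrt t/2}{\sinh(\sqrt t/2)}$ and $\pt_j = e_j(t_1,t_2,\ldots)$ is the $j$-th elementary symmetric function. Using the Weierstrass product $\sinh(z) = z\prod_{n\geq 1}\big(1 + z^2/(\pi n)^2\big)$ with $z = \sqrt t/2$, one gets $Q(t) = \prod_{n\geq 1}\big(1 + t/(2\pi n)^2\big)^{-1}$ and hence
\[
g(t) := \log Q(t) = \sum_{m\geq 1}\frac{(-1)^m}{m}\,\frac{\zeta(2m)}{(2\pi)^{2m}}\,t^m .
\]
The only input that changes from the $L$-genus is this series: there $\cosh$ and $\sinh$ combine to produce \emph{signed} single-variable sums together with the even-correction underlying the relation $\geq_2$, whereas here only $\sinh$ occurs, giving the unsigned coefficients $\zeta(2m)/(2\pi)^{2m}$.

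Next I would extract the coefficient of $\pt_{j_1}\cdots\pt_{j_r} = \prod_\ell \pt_\ell^{\alpha_\ell}$. Writing $F = \exp\big(\sum_i g(t_i)\big) = \exp\big(\sum_m g_m P_m\big)$, where $P_m = \sum_i t_i^m$ is the $m$-th power sum and $g_m$ the coefficient above, the sought coefficient equals $\frac{1}{\alpha_1!\cdots\alpha_k!}\,\partial_{\pt_{j_1}}\cdots\partial_{\pt_{j_r}}F\big|_{\pt=0}$, since the $\pt_\ell$ are algebraically independent. Expressing each $P_m$ through Newton's identity $\log\big(1 + \sum_{k\geq 1}\pt_k t^k\big) = \sum_m \frac{(-1)^{m-1}}{m}P_m t^m$, a short computation gives $\partial_{\pt_{i_1}}\cdots\partial_{\pt_{i_b}}P_m\big|_{\pt=0} = (-1)^{m+b}m(b-1)!$ when $i_1+\cdots+i_b=m$ and $0$ otherwise. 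Applying the multivariate Faà di Bruno formula to the exponential (whose value at $\pt=0$ is $1$) then reduces the coefficient, exactly as in the proof of Theorem~\ref{thm:main}, to a sum over set partitions, each block $B$ contributing $(-1)^{|B|}(|B|-1)!\,c_{j(B)}$:
\[
a_{j_1,\ldots,j_r} = \frac{(-1)^r}{\alpha_1!\cdots\alpha_k!}\sum_{\pi}\;\prod_{B\in\pi}(|B|-1)!\;c_{j(B)}, \qquad c_M = \frac{\zeta(2M)}{(2\pi)^{2M}},
\]
where $\pi$ runs over set partitions of $\{1,\ldots,r\}$ and $j(B) = \sum_{s\in B} j_s$; the global sign is $\prod_B(-1)^{|B|} = (-1)^r$.

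The remaining step is the combinatorial identity --- the non-alternating analogue of the one used in the proof of Theorem~\ref{thm:main} --- asserting that for any $s_1,\ldots,s_r>1$,
\[
\sum_{\pi}\;\prod_{B\in\pi}(|B|-1)!\;\zeta\Big(\sum_{i\in B}s_i\Big) = \zetaAsym(s_1,\ldots,s_r).
\]
Since $\prod_B c_{j(B)} = \prod_B(2\pi)^{-2 j(B)}\prod_B\zeta(2j(B)) = (2\pi)^{-2k}\prod_B\zeta(2j(B))$, applying this identity with $s_i = 2j_i$ yields the stated formula. I expect this to be the main point: it is a harmonic (stuffle) computation, proved by expanding each $\zetaA(s_{\sigma_1},\ldots,s_{\sigma_r})$ according to the equality pattern of its summation indices, which rewrites $\zetaAsym$ as a sum over \emph{ordered} set partitions of strict multiple zeta values, and then reconciling this with the left-hand side via the quasi-shuffle product of single zeta values. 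Here the plain relation $\geq$ and the absence of signs are precisely what make the \emph{unsigned} quasi-shuffle apply; in the $L$-case the same bookkeeping is what forces the alternating signs and the convention $\geq_2$.

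Finally, the sign assertion requires no analogue of Theorem~\ref{thm:positivity}: every summand of $\zetaA(s_1,\ldots,s_r)$ is positive, so $\zetaAsym(2j_1,\ldots,2j_r)>0$ and $a_{j_1,\ldots,j_r}$ has the sign of $(-1)^r$. Thus the genuine content beyond the machinery of Theorem~\ref{thm:main} is only the computation of $\log Q$ for $\sinh$ and the verification that the structural reduction and the quasi-shuffle identity specialize correctly once $\tanh$ is replaced by $\sinh$; granting that machinery, the $\Ahat$-case is in fact easier than the $L$-case, the positivity being trivial rather than requiring Theorem~\ref{thm:positivity}.
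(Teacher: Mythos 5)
Your proof is correct, and its overall skeleton matches the paper's: reduce $a_{j_1,\ldots,j_r}$ to the partition sum $\frac{(-1)^r}{\alpha_1!\cdots\alpha_k!}\frac{1}{(2\pi)^{2k}}\sum_{\PP}c_\PP\,\zeta(2k_1)\cdots\zeta(2k_\ell)$, invoke Hoffman's unsigned symmetric-sum formula \cite[Theorem 2.1]{Hoffman} to identify that sum with $\zetaAsym(2j_1,\ldots,2j_r)$, and observe that positivity is immediate because every summand of $\zetaA$ is positive (no analogue of Theorem \ref{thm:positivity} is needed --- the paper makes exactly this point). Where you differ is in how the reduction to the partition sum is obtained. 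The paper computes the single coefficient $a_k=-\frac{1}{(2\pi)^{2k}}\zeta(2k)$ via the Cauchy formula and then applies the general multiplicative-sequence identity of Theorem \ref{thm:multiplicative genus}, which is itself proved by M\"obius inversion on the partition lattice (Theorem \ref{thm:mp}). You instead expand $\log Q$ directly from the Weierstrass product of $\sinh$, getting the coefficients $\frac{(-1)^m}{m}\frac{\zeta(2m)}{(2\pi)^{2m}}$, and then run Newton's identities plus Fa\`a di Bruno to produce the sum over set partitions; your derivative computation $\partial_{\pt_{i_1}}\cdots\partial_{\pt_{i_b}}P_m\big|_{\pt=0}=(-1)^{m+b}m(b-1)!$ checks out and yields exactly the paper's intermediate expression. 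Your route buys a self-contained derivation that bypasses both the Cauchy-formula evaluation of $a_k$ and the abstract Theorem \ref{thm:multiplicative genus}; the paper's route buys reusability, since the same Theorem \ref{thm:multiplicative genus} also drives the $L$-polynomial case. One small caveat: you only sketch the quasi-shuffle proof of the unsigned Hoffman identity, but the paper likewise just cites it (proving by M\"obius inversion only the strict, signed variant in Theorem \ref{thm:hoffman}), so you are on equal footing there.
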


\section{Proofs}
The first step in our proof is to establish a formula that expresses the coefficient $\hh_{j_1,\ldots,j_r}$ as a linear combination of products $\hh_{k_1}\cdots \hh_{k_\ell}$.
This generalizes the formulas for $\hh_{i,j}$ and $\hh_{i,j,k}$ found in \cite{Weiss}. In the appendix of \cite{FS}, recursive formulas for computing $\hh_{j_1,\ldots,j_r}$ in terms of products  $\hh_{k_1}\cdots \hh_{k_\ell}$ are given. Here we give an explicit closed formula. The result holds for arbitrary multiplicative sequences of polynomials (see \cite[\S1]{Hirzebruch}).

\begin{theorem} \label{thm:multiplicative genus}
Let $K_0,K_1,K_2,\ldots$ be a multiplicative sequence of polynomials with
$$K_k = \sum \lambda_{j_1,\ldots,j_r} \pt_{j_1}\cdots\pt_{j_r}.$$
The coefficients satisfy the relation
\begin{equation} \label{eq:hirzebruch}
\lambda_{j_1,\ldots,j_r} = \frac{1}{\alpha_1!\cdots \alpha_k!} \sum_{\mathcal{P}} (-1)^{r-\ell}c_\PP \, \lambda_{k_1}\cdots \lambda_{k_\ell},
\end{equation}
where $\alpha_i$ counts how many of $j_1,\ldots,j_r$ are equal to $i$, the sum is over all partitions $\mathcal{P} = \{P_1,\ldots,P_\ell\}$ of the set $\{1,2,\ldots,r\}$,
$$c_\PP = \big(|P_1|-1\big)!\cdots \big(|P_\ell|-1\big)!,$$
and
$$k_m = \sum_{i\in P_m} j_i.$$
\end{theorem}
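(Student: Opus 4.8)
The plan is to pass to the generating function of the whole sequence and exploit that it is the exponential of a series in the power sums. Recall that a multiplicative sequence is determined by a characteristic power series $Q(x) = 1 + \sum_{i\geq 1} b_i x^i$ through the identity $\sum_{k\geq 0} K_k = \prod_i Q(x_i)$, where $\pt_j$ is the $j$th elementary symmetric function in the formal roots $x_i$, so that $1 + \sum_j \pt_j t^j = \prod_i(1 + x_i t)$. Writing $\log Q(x) = \sum_{m\geq 1} q_m x^m$ and $s_m = \sum_i x_i^m$ for the power sums, taking logarithms gives
$$F := \sum_{k\geq 0} K_k = \exp\Big(\sum_{m\geq 1} q_m s_m\Big).$$
First I would identify the single-part coefficients. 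The part of $s_m$ that is linear in the $\pt_j$ is $(-1)^{m-1} m\, \pt_m$, since this is $(-1)^{m-1}m$ times the degree-one term of $\frac{(-1)^{m-1}}{m}s_m = [t^m]\log(1 + \sum_j \pt_j t^j)$. As $e^G$ and $G$ have the same linear part in the $\pt_j$, reading off the coefficient of $\pt_m$ yields $\lambda_m = (-1)^{m-1} m\, q_m$, i.e. $q_m = \frac{(-1)^{m-1}}{m}\lambda_m$.

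The second step is to make the $\pt$-expansion of $G := \log F$ fully explicit. From $s_m = (-1)^{m-1} m\,[t^m]\log(1 + \sum_j \pt_j t^j)$ and $q_m(-1)^{m-1}m = \lambda_m$ one gets $G = \sum_m \lambda_m [t^m]\log(1 + \sum_j \pt_j t^j)$, and expanding the logarithm as $\sum_{\ell\geq 1}\frac{(-1)^{\ell-1}}{\ell}(\cdots)^\ell$ produces
$$G = \sum_{m\geq 1}\lambda_m \sum_{(b_1,\ldots,b_\ell)} \frac{(-1)^{\ell-1}}{\ell}\, \pt_{b_1}\cdots \pt_{b_\ell},$$
the inner sum being over all compositions $(b_1,\ldots,b_\ell)$ of $m$ into $\ell\geq 1$ positive parts.

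The third and decisive step is coefficient extraction. Since the $\pt_j$ are algebraically independent, $\lambda_{j_1,\ldots,j_r}$ is the coefficient of $\prod_i \pt_{j_i}$ in $F$, so Taylor's theorem gives $\big(\prod_i \alpha_i!\big)\lambda_{j_1,\ldots,j_r} = \big(\prod_{i=1}^r \partial_{\pt_{j_i}}\big) F\big|_{\pt = 0}$, the factor $\frac{1}{\alpha_1!\cdots\alpha_k!}$ arising exactly from converting a monomial coefficient into an iterated derivative at the origin. Applying the standard formula for a mixed partial of an exponential (together with $G|_0 = 0$),
$$\Big(\prod_{i=1}^r \partial_{\pt_{j_i}}\Big) e^{G}\Big|_{0} = \sum_{\PP} \prod_{B\in\PP} \Big(\prod_{i\in B}\partial_{\pt_{j_i}}\Big) G\Big|_0,$$
reduces everything to a single block. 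Here the previous step pays off: only the monomial $\prod_{i\in B}\pt_{j_i}$ of $G$ survives the differentiation at $0$, and summing over the compositions of $k_B = \sum_{i\in B} j_i$ that reorder the multiset $\{j_i : i\in B\}$ gives $\big(\prod_{i\in B}\partial_{\pt_{j_i}}\big)G|_0 = (-1)^{|B|-1}(|B|-1)!\,\lambda_{k_B}$. Substituting this and using $\prod_B(-1)^{|B|-1} = (-1)^{r-\ell}$ and $\prod_B(|B|-1)! = c_\PP$ yields precisely \eqref{eq:hirzebruch}.

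I expect the main obstacle to be the bookkeeping in this final block computation: one must check that the internal multiplicities of repeated parts inside a block --- a factor $\prod_m \alpha_m(B)!$ counting reorderings of the multiset of parts --- cancel precisely against the same factor produced by differentiating the repeated variables, leaving the clean coefficient $(-1)^{|B|-1}(|B|-1)!\,\lambda_{k_B}$ with no residual symmetry factor. Verifying this cancellation, and confirming that the global factor $\frac{1}{\alpha_1!\cdots\alpha_k!}$ lands correctly from the monomial-to-derivative conversion, are the two points that require genuine care; the exponential-formula and logarithm expansions used around them are routine.
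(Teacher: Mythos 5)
Your proof is correct, but it follows a genuinely different route from the paper's. The paper quotes Hirzebruch's Lemma~1.4.1 to identify $\lambda_{j_1,\ldots,j_r}$ with the monomial symmetric function in the formal roots of the characteristic series and $\lambda_{k_1}\cdots\lambda_{k_\ell}$ with a power-sum symmetric function, and then invokes the transition formula from monomial to power-sum symmetric functions (Theorem~\ref{thm:mp}), which it proves by M\"obius inversion on the partition lattice $\Pi_r$. You instead work with the generating function $F=\prod_i Q(x_i)$ directly: Newton's identity lets you write $\log F=\sum_m \lambda_m\,[t^m]\log\bigl(1+\sum_j \pt_j t^j\bigr)$ as an explicit polynomial in the $\pt_j$, and the coefficient $\lambda_{j_1,\ldots,j_r}$ is then extracted via the set-partition (Fa\`a di Bruno) expansion of mixed partials of $e^G$ at the origin. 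The sum over partitions $\PP$ of $\{1,\ldots,r\}$ enters through the exponential formula rather than through the M\"obius function of $\Pi_r$; these are two faces of the same combinatorics, since the values $(-1)^{\ell-1}(\ell-1)!$ are exactly the coefficients of the logarithm series. Your version is more self-contained --- it does not need the monomial-to-power-sum transition as a separate input, and you correctly identify (and resolve) the one delicate point, namely the cancellation of the internal multiplicities $\prod_m\alpha_m(B)!$ within each block against the derivative factors. What the paper's reduction to Theorem~\ref{thm:mp} buys is reuse: the same M\"obius-inversion identity \eqref{eq:mp} on $\Pi_S$ also yields Hoffman's formula and, in modified form, Theorem~\ref{thm:multiple eta}, so the paper gets three results from one combinatorial lemma.
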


\begin{proof}
A multiplicative sequence of polynomials is determined by its characteristic power series
$$Q(z) = \sum_{k=0}^\infty b_k z^k,$$
where $b_k = \lambda_{1,\ldots,1}$ is the coefficient of $\pt_1^k$ in $K_k$. Indeed, if we, as in \cite{Hirzebruch}, formally interpret the coefficients $b_k$ as elementary symmetric functions in $\beta_1',\ldots,\beta_m'$ ($m\geq k$), so that
$$1+b_1z +b_2z^2 +\cdots + b_mz^m = (1+\beta_1'z)\cdots(1+\beta_m'z),$$
then the coefficient $\lambda_{j_1,\ldots,j_r}$ is the \emph{monomial symmetric function} in $\beta_1',\ldots,\beta_m'$ (see~\cite[Lemma 1.4.1]{Hirzebruch}).

Note that $\lambda_k$ equals the power sum $\sum_i (\beta_i')^k$. The product $\lambda_{k_1}\cdots \lambda_{k_\ell}$ is then the \emph{power sum symmetric function} evaluated at $\beta_i'$, and the claim follows from a general formula that expresses the monomial symmetric functions in terms of power sum symmetric functions, see Theorem \ref{thm:mp} below.
\end{proof}

The characteristic series of the Hirzebruch $L$-polynomials is
$$\frac{\sqrt{z}}{\tanh \sqrt{z}} = 1 + \sum_{k=1}^\infty b_k z^k,$$
where 
$$b_k = (-1)^{k-1} \frac{2^{2k}}{(2k)!}B_k,$$
and $B_k$ are the Bernoulli numbers,
$$B_1 =\frac{1}{6},\,\, B_2 = \frac{1}{30},\,\, B_3 = \frac{1}{42}, \ldots,$$
see \cite[\S1.5]{Hirzebruch}.

As is well-known, the leading coefficient $\hh_k$ of $\pt_k$ in $\LL_k$ is given by
\begin{equation} \label{eq:leading coefficient}
\hh_k = \frac{2^{2k}\big(2^{2k-1}-1\big)}{(2k)!}B_k,
\end{equation}
see \cite[p.12]{Hirzebruch}. In \cite{Weiss}, the formula
$$\hh_k = \zeta(2k)\frac{2^{2k}-2}{\pi^{2k}},$$
involving the Riemann zeta function,
$$
\zeta(s) = \sum_{n=1}^\infty \frac{1}{n^s},
$$
is used to argue that $h_{i,j}<0$ and $h_{i,j,k}>0$. From this point on, our argument will depart from that of \cite{Weiss}. A key observation is that we can express $\hh_k$ in terms of the alternating zeta function,
$$\zeta^*(s)  = \sum_{n=1}^\infty \frac{(-1)^{n-1}}{n^s},$$
instead of the Riemann zeta function. It is well-known, and easily seen, that 
$$\zeta^*(s) = \big(1-2^{1-s}\big)\zeta(s).$$
Moreover, the following holds for all positive integers $k$,
\begin{equation} \label{eq:eta-bernoulli}
\zeta^*(2k) = \frac{\pi^{2k}(2^{2k-1}-1)}{(2k)!}B_k.
\end{equation}
Combining this with \eqref{eq:leading coefficient}, we see that
\begin{equation}
\hh_k = \frac{2^{2k}}{\pi^{2k}} \zeta^*(2k).
\end{equation}
From \eqref{eq:hirzebruch} we get
\begin{equation} \label{eq:hof}
\hh_{j_1,\ldots,j_r} = \frac{1}{\alpha_1!\cdots \alpha_k!} \frac{2^{2k}}{\pi^{2k}}\sum_{\mathcal{P}} (-1)^{r-\ell}c_\PP \, \zeta^*(2k_1)\cdots \zeta^*(2k_\ell),
\end{equation}
where the notation is as in Theorem \ref{thm:multiplicative genus}.

The next observation is that the sum in the right hand side bears a striking resemblance with the right hand side of Hoffman's formula \cite[Theorem 2.2]{Hoffman} (proved anew in Theorem \ref{thm:hoffman} below), which relates multiple zeta values and products of zeta values --- the only difference is that $\zeta^*$ appears instead of $\zeta$. 
The second step in the proof is then to find a Hoffman-like formula for $\zeta^*$. Here is what we were led to write down: Define, for real numbers $s_1,\ldots,s_r>1$,
$$\zetaL(s_1,\ldots,s_r) = \sum_{n_1\geq_2 \cdots \geq_2 n_r\geq 1} \frac{(-1)^{n_1+\cdots + n_r }}{n_1^{s_1}\cdots n_r^{s_r}},$$ 
where $n\geq_2 m$ means ``$n\geq m$ with equality only if $n$ is even''. Then symmetrize, and define
$$\zetaLsym(s_1, \ldots,s_r) = \sum_{\sigma\in \Sigma_r} \zetaL(s_{\sigma_1},\ldots,s_{\sigma_r}).$$
Here is our Hoffman-like formula. Together with \eqref{eq:hof} it implies Theorem \ref{thm:main}.

\begin{theorem} \label{thm:multiple eta}
The following equality holds for all real $s_1,\ldots,s_r>1$, 
\begin{equation} \label{eq:multiple eta}
\sum_{\PP} (-1)^{r-\ell} c_\PP \, \zeta^*(\underline{s},\PP) = (-1)^{r}\zetaLsym(s_1, \ldots,s_r),
\end{equation}
where the sum is over all partitions $\PP = \{P_1,\ldots,P_\ell\}$ of $\{1,2,\ldots,r\}$ and
$$c_{\PP} = (|P_1|-1)!\cdots (|P_\ell|-1)!, \quad \zeta^*(\underline{s},\PP)=\zeta^*\big(\sum_{i\in P_1} s_i\big) \cdots \zeta^*\big(\sum_{i\in P_\ell} s_i\big).$$

\end{theorem}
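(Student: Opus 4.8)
The plan is to expand both sides of \eqref{eq:multiple eta} as absolutely convergent series over value-assignments $\underline n = (n_1,\ldots,n_r)$ with $n_i \geq 1$, and to compare the coefficients of $\prod_i n_i^{-s_i}$ one $\underline n$ at a time. Since $s_1,\ldots,s_r>1$, every series in sight is dominated by $\prod_i \zeta(s_i)<\infty$, so all the regroupings below are legitimate, and it suffices to write each side as $\sum_{\underline n} c(\underline n)\prod_i n_i^{-s_i}$ and to show that the two coefficient functions agree.

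For the right-hand side, unfolding the symmetrization gives
$$(-1)^r\zetaLsym(s_1,\ldots,s_r) = (-1)^r\sum_{\underline n}\#\set{\sigma\in\Sigma_r}{n_{\sigma_1}\geq_2\cdots\geq_2 n_{\sigma_r}}\,\frac{(-1)^{n_1+\cdots+n_r}}{n_1^{s_1}\cdots n_r^{s_r}}.$$
The first step is the combinatorial count of admissible $\sigma$. Group the indices $\{1,\ldots,r\}$ into value-classes $B_1,\ldots,B_k$ (indices sharing a common value, with distinct values $u_1,\ldots,u_k$). A permutation gives an admissible $\geq_2$-chain iff the values decrease weakly along $\sigma$ and each equality between consecutive entries occurs at an even value; since equal entries are forced to be adjacent, each class must occupy one block, the blocks are ordered by decreasing value, the internal order of each block is arbitrary, and every class of size $\geq 2$ must have even value. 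Hence the count is $\prod_j |B_j|!$ when every class of size $\geq 2$ has even value, and $0$ otherwise. This already isolates the meaning of the $\geq_2$ condition.

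For the left-hand side I would substitute $\zeta^*(w)=\sum_{v\geq 1}(-1)^{v-1}v^{-w}$ into each factor of $\zeta^*(\underline s,\PP)$ and reorganize the resulting double sum over $\PP$ and over block-values by the value-assignment $\underline n$ they induce, each block of $\PP$ receiving a single value. For a fixed $\underline n$ with value-partition $\pi(\underline n)=\{B_1,\ldots,B_k\}$, the partitions that contribute are exactly the refinements $\PP\leq\pi(\underline n)$, the block-values then being forced. A short sign computation collapses $(-1)^{r-\ell}\prod_m(-1)^{v_m-1}$ to $(-1)^r\prod_j\big((-1)^{u_j}\big)^{\ell_j}$, where $\ell_j$ is the number of blocks of $\PP$ inside $B_j$; and because both $c_\PP$ and this sign split as products over the parts of $\PP$ lying in each $B_j$, the sum over $\PP\leq\pi(\underline n)$ factorizes over the classes:
$$c_{\mathrm{LHS}}(\underline n) = (-1)^r\prod_{j=1}^k F\big(|B_j|,(-1)^{u_j}\big), \qquad F(b,\epsilon) := \sum_{\PP\,\vdash\,\{1,\ldots,b\}} c_\PP\,\epsilon^{\ell(\PP)}.$$

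The crux, and the step I expect to be the main obstacle, is the evaluation of $F(b,\epsilon)$. By the exponential formula — equivalently, because $c_\PP$ counts the permutations of a block with prescribed cycle type — one has $F(b,\epsilon)=\epsilon(\epsilon+1)\cdots(\epsilon+b-1)$, the rising factorial, i.e.\ the generating polynomial of the unsigned Stirling numbers of the first kind. Therefore $F(b,1)=b!$, while $F(1,-1)=-1$ and $F(b,-1)=0$ for $b\geq 2$. Substituting $\epsilon=(-1)^{u_j}$, the factor for a class vanishes precisely when that class has size $\geq 2$ and odd value, reproducing the admissibility constraint found on the right; and on admissible $\underline n$ the surviving factors contribute $\prod_j|B_j|!$ together with the sign $\prod_{|B_j|=1}(-1)^{u_j}=(-1)^{n_1+\cdots+n_r}$. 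Hence $c_{\mathrm{LHS}}(\underline n)$ equals the right-hand coefficient for every $\underline n$, which proves \eqref{eq:multiple eta}. This is the alternating-sign counterpart of the proof of Hoffman's formula (Theorem~\ref{thm:hoffman}): the vanishing $F(b,-1)=0$ for $b\geq 2$ is exactly the mechanism by which the signs $(-1)^{n_i}$ manufacture the ``ties only at even values'' rule, in place of the unconstrained ties governing the $\zeta$-version.
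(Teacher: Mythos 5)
Your proof is correct, but it takes a genuinely different route from the paper. The paper deduces Theorem \ref{thm:multiple eta} by specializing a general formal power series identity (Theorem \ref{thm:Tp}), which it proves by M\"obius inversion on the partition lattice: the identity is first inverted to $(-1)^{\ell(\pi)}\pp_\pi = \sum_{\rho\geq\pi}(-1)^{\ell(\rho)}\TT^\Sigma_\rho$, and that is verified by comparing coefficients of the parity-refined series $\nn_{\nu,e}$, the key input being Lemma \ref{lemma:length sum}, i.e.\ $\sum_{\rho\geq\pi}(-1)^{\ell(\rho)}\ell(\rho)!=(-1)^{\ell(\pi)}$, proved via Stirling numbers of the second kind. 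You instead verify the specialized identity directly, matching the coefficient of $n_1^{-s_1}\cdots n_r^{-s_r}$ for each fixed value-assignment $\underline n$; your key input is the rising-factorial evaluation $F(b,\epsilon)=\sum_{\PP}c_\PP\,\epsilon^{\ell(\PP)}=\epsilon(\epsilon+1)\cdots(\epsilon+b-1)$ (unsigned Stirling numbers of the first kind, via the cycle-counting interpretation of $c_\PP$), whose vanishing at $\epsilon=-1$ for $b\geq 2$ is what forces the ``ties only at even values'' rule. The two key identities are dual faces of the same coin --- $F(b,-1)=0$ for $b\geq 2$ is equivalent to the defining recursion $\sum_{\rho}\mu(\hat 0,\rho)=0$ for the M\"obius function of $\Pi_{\{1,\dots,b\}}$ --- but your argument avoids the inversion step entirely and is more self-contained for this particular statement, at the cost of not producing the more general identity \eqref{eq:Tp} that the paper records. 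Your convergence remark is adequate since all coefficient functions are bounded by constants depending only on $r$ and $\sum_{\underline n}\prod_i n_i^{-s_i}=\prod_i\zeta(s_i)<\infty$, and your count of admissible permutations on the right-hand side is correct.
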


\begin{proof}
This will follow by specialization of Theorem \ref{thm:Tp} below.
\end{proof}

Next we turn to the proof of Theorem \ref{thm:positivity}, which says that
$$\zetaL(s_1,\ldots,s_r) < 0$$
for all real $s_1,\ldots,s_r>1$.

\begin{proof}[Proof of Theorem \ref{thm:positivity}]
The proof is in principle not more difficult than the proof that $\zeta^*(s)$ is positive; to see this one simply arranges the sum as
$$\zeta^*(s) = \left(1-\frac{1}{2^s}\right) + \left(\frac{1}{3^s} - \frac{1}{4^s}\right) +\cdots + \left(\frac{1}{(2k-1)^s} - \frac{1}{(2k)^s}\right) + \cdots,$$
and notes that the summands are positive. Since the series is absolutely convergent, we are free to rearrange as we please, as the reader will recall from elementary analysis.

Towards the general case, introduce for $k\geq 1$ the auxiliary series
$$
\zetaL_{2k}(s_1,\ldots,s_r) = \sum_{n_1\geq_2 \cdots \geq_2 n_r \geq 2k} \frac{(-1)^{n_1 +\cdots + n_r}}{n_1^{s_1} \cdots n_r^{s_r}}.
$$
Then one can argue using the following two equalities, whose verification we leave to the reader:
\begin{equation*} \label{eq:eta1}
\zetaL(s_1,\ldots,s_r) = \sum_{k\geq 1} \left( - \frac{1}{\big(2k-1\big)^{s_r}} + \frac{1}{\big(2k\big)^{s_r}}\right) \zetaL_{2k}(s_1,\ldots,s_{r-1}),
\end{equation*}
\begin{align*} \label{eq:eta2}
  \zetaL_{2k}(s_1&,\ldots, s_r) = \\ & \sum_{\ell \geq k} \sum_{j=1}^r
  \frac{1}{(2\ell)^{s_r} \cdots (2\ell)^{s_{j+1}}} \left( \frac{1}{\big(2\ell\big)^{s_j}} - \frac{1}{\big(2\ell+1\big)^{s_j}}\right) \zetaL_{2\ell+2}(s_1,\ldots,s_{j-1}).
\end{align*}
Here $T_{2\ell+2}(s_1,\ldots,s_{j-1})$ should be interpreted as $1$ for $j=1$.
The second equality may be used to show that $\zetaL_{2k}(s_1,\ldots,s_r)$ is positive by induction on $r$. The first equality then shows that $\zetaL(s_1,\ldots,s_r)$ is negative.
\end{proof}

Finally, we turn to the proof of Theorem \ref{thm:A-hat}. The argument turns out to be easier in this case. Recall that the $\hat{A}$-genus has characteristic series
$$Q(z) = \frac{\sqrt{z}/2}{\sinh (\sqrt{z}/2)}.$$
Let us write
$$\Ahat_k = \sum a_{j_1,\ldots,j_r} \pt_{j_1} \cdots \pt_{j_r},$$
where the sum is over all partitions $(j_1,\ldots,j_r)$ of $k$.
By using the Cauchy formula (see \cite[p.11]{Hirzebruch})
one can calculate the coefficient $a_k$ of $\pt_k$ in $\Ahat_k$. The result is
$$a_k = \frac{(-1)^k}{2(2k)!} B_k = - \frac{1}{(2\pi)^{2k}}\zeta(2k).$$
It follows that
$$a_{k_1} \cdots a_{k_\ell} = \frac{(-1)^{\ell}}{(2\pi)^{2k}} \zeta(2k_1) \cdots \zeta(2k_\ell),$$
for every partition $(k_1,\ldots,k_\ell)$ of $k$. Theorem \ref{thm:multiplicative genus} then yields
\begin{align*}
a_{j_1,\ldots, j_r} & = \frac{1}{\alpha_1!\cdots \alpha_k!}\sum_\PP (-1)^{r-\ell} c_\PP a_{k_1}\cdots a_{k_\ell} \\
& = \frac{1}{\alpha_1!\cdots \alpha_k!} \frac{(-1)^r}{(2\pi)^{2k}} \sum_\PP c_\PP \zeta(2k_1) \cdots \zeta(2k_\ell).
\end{align*}
The terms in the sum are clearly positive, so we see already from this expression that $(-1)^ra_{j_1,\ldots,j_r}>0$ for all partitions $(j_1,\ldots,j_r)$ of $k$. However, more can be said; the sum in the right hand side now not only resembles but is \emph{equal} to the right hand side of another formula of Hoffman \cite[Theorem 2.1]{Hoffman}. In our notation this formula says that
$$
\zetaAsym(s_1,\ldots,s_r) = \sum_\PP c_\PP \zeta(\underline{s},\PP).
$$
This proves Theorem \ref{thm:A-hat}.

\section{Combinatorics of infinite sums}
The proofs of Theorem \ref{thm:multiplicative genus} and Theorem  \ref{thm:multiple eta}, as well as of Hoffman's formula, share the same combinatorial underpinnings; this is the topic of the present section.

Recall that a \emph{partition} of a set $S$ is a set of non-empty disjoint subsets,
$$\pi = \{\pi_1,\ldots,\pi_r\},$$
such that $S = \pi_1\cup \cdots \cup \pi_r$. Write $\ell(\pi) = r$ for the \emph{length} of $\pi$. The set of partitions $\Pi_S$ is partially ordered by refinement,
$\pi = \{\pi_1\ldots,\pi_r\} \leq \rho = \{\rho_1,\ldots,\rho_\ell\}$ if and only if there is a partition $\PP = \{P_1,\ldots,P_\ell\}$ of the set $\{1,2,\ldots,r\}$ such that
\begin{equation} \label{eq:order relation}
\rho_i = \bigcup_{j\in P_i} \pi_j,\quad 1\leq i\leq \ell.
\end{equation}
We will write $\rho = \PP(\pi)$ if \eqref{eq:order relation} holds. Note that for every $\rho\geq \pi$ there is a unique partition $\PP$ such that $\rho = \PP(\pi)$.

We will consider certain formal power series in indeterminates $a_n$ for $a\in S$ and positive integers $n$. For a subset $T\subseteq S$, write
$$f_T(n) = \prod_{a\in T} a_n.$$
For a partition $\pi =  \{\pi_1,\ldots,\pi_r\}$ of $S$, consider the formal power series
\begin{align*}
p_\pi & = \sum_{n_1,\ldots,n_r} f_{\pi_1}(n_1) \cdots f_{\pi_r}(n_r), \\
m_\pi & = \sum_{\stackrel{n_1,\ldots,n_r}{distinct}} f_{\pi_1}(n_1) \cdots f_{\pi_r}(n_r). \\
\end{align*}
It is then immediate that
$$p_\pi = \sum_{\rho \geq \pi} m_\rho.$$
By applying the M\"obius inversion formula (see e.g.~\cite[Proposition 3.7.2]{Stanley1}), we get
\begin{equation} \label{eq:mp}
m_\pi = \sum_{\rho \geq \pi} \mu(\pi,\rho) p_\rho.
\end{equation}
The M\"obius function of $\Pi_S$ is given by
$$\mu(\pi,\rho) = (-1)^{\ell(\pi)-\ell(\rho)}(b_1-1)!\ldots (b_{\ell(\rho)}-1)!,$$
where the number
$$b_i = b_i(\pi,\rho) = |P_i|$$
counts how many `$\pi$-blocks' $\rho_i$ consists of, see e.g.~\cite[Example 3.10.4]{Stanley1}.

Let us first note that this gives a neat proof of Hoffman's formula (though we would be surprised if this has not been noticed before). Recall that the multiple zeta function is defined by
$$\zeta(s_1,\ldots,s_r) = \sum_{n_1>\cdots > n_r\geq 1} \frac{1}{n_1^{s_1} \cdots n_r^{s_r}},$$
for real $s_1,\ldots,s_r >1$.
\begin{theorem}[Hoffman {\cite[Theorem 2.2]{Hoffman}}] \label{thm:hoffman}
$$\sum_{\sigma \in \Sigma_r} \zeta(s_{\sigma_1},\ldots,s_{\sigma_r}) = \sum_{\PP} (-1)^{r-\ell} c_{\PP} \, \zeta(\underline{s},\PP),$$
where the sum is over all partitions $\PP = \{P_1,\ldots,P_\ell\}$ of $\{1,2,\ldots,r\}$ and
$$c_{\PP} = (|P_1|-1)!\cdots (|P_\ell|-1)!, \quad \zeta(\underline{s},\PP)=\zeta\big(\sum_{i\in P_1} s_i\big) \cdots \zeta\big(\sum_{i\in P_\ell} s_i\big).$$
\end{theorem}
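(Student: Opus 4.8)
The plan is to deduce the formula by specializing the M\"obius inversion identity \eqref{eq:mp} at the minimal element of the partition lattice. I would take $S=\{1,\ldots,r\}$ and substitute the indeterminates $a_n\mapsto n^{-s_a}$, so that $f_T(n)$ becomes $n^{-\sum_{a\in T}s_a}$ for every $T\subseteq S$. Because each $s_i>1$, all the series $p_\pi$ and $m_\pi$ then converge absolutely, and the formal identity \eqref{eq:mp} becomes a genuine identity of real numbers. I would then evaluate it at the finest partition $\hat 0=\{\{1\},\ldots,\{r\}\}$, which is the minimum of $\Pi_S$.

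For the left-hand side, the specialized $m_{\hat 0}$ is the sum of $n_1^{-s_1}\cdots n_r^{-s_r}$ over all tuples of \emph{distinct} positive integers. Sorting each such tuple into strictly decreasing order determines a unique permutation $\sigma\in\Sigma_r$, and grouping the terms accordingly yields
\[
m_{\hat 0}=\sum_{\sigma\in\Sigma_r}\ \sum_{m_1>\cdots>m_r\geq 1} m_1^{-s_{\sigma_1}}\cdots m_r^{-s_{\sigma_r}} = \sum_{\sigma\in\Sigma_r}\zeta(s_{\sigma_1},\ldots,s_{\sigma_r}),
\]
which is the left-hand side of Hoffman's formula.

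For the right-hand side, I would use that $\hat 0$ is minimal, so $\rho$ ranges over \emph{all} partitions of $S$; via \eqref{eq:order relation} these correspond bijectively to partitions $\PP=\{P_1,\ldots,P_\ell\}$ of $\{1,\ldots,r\}$, with $\rho_i=P_i$. The series $p_\rho$ then factors as a product over the blocks,
\[
p_\rho=\prod_{i=1}^\ell \sum_{n\geq 1} n^{-\sum_{a\in P_i}s_a}=\prod_{i=1}^\ell \zeta\Big(\sum_{a\in P_i}s_a\Big)=\zeta(\underline s,\PP),
\]
and the M\"obius function formula gives $\mu(\hat 0,\rho)=(-1)^{r-\ell}(|P_1|-1)!\cdots(|P_\ell|-1)!=(-1)^{r-\ell}c_\PP$, since each $\hat 0$-block is a singleton and hence $b_i=|P_i|$. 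Substituting these into \eqref{eq:mp} produces exactly $\sum_\PP(-1)^{r-\ell}c_\PP\,\zeta(\underline s,\PP)$, and comparison with the computation of $m_{\hat 0}$ above proves the theorem.

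The only point requiring genuine care is the first step: justifying that the purely formal identity \eqref{eq:mp} survives the substitution. This rests on absolute convergence (guaranteed by $s_i>1$), which simultaneously legitimizes the rearrangement sorting distinct tuples by magnitude and the factorization of $p_\rho$ into a product over blocks. I expect no serious obstacle beyond this bookkeeping.
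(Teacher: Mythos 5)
Your proposal is correct and is essentially the paper's own proof, which simply says to take $S=\{1,\ldots,r\}$ and substitute $a_n$ by $\frac{1}{n^{s_a}}$ in \eqref{eq:mp}; you have merely spelled out the specialization at the minimal partition, the identification of $m_{\hat 0}$ and $p_\rho$, and the convergence bookkeeping in more detail.
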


\begin{proof}
Take $S= \{1,2,\ldots,r\}$ and substitute $a_n$ by $\frac{1}{n^{s_a}}$ for $a\in S$ in \eqref{eq:mp}.
\end{proof}

Secondly, we will use \eqref{eq:mp} to express the monomial symmetric functions in terms of power sum symmetric functions. We refer to \cite[Chapter 7]{Stanley2} for a pleasant introduction to symmetric functions. For an integer partition $I = (i_1,\ldots,i_r)\vdash k$, recall that the \emph{power sum symmetric function} $p_I$ is the formal power series in indeterminates $x_1,x_2,\ldots$ defined by $p_I = p_{i_1}\cdots p_{i_r}$, where
$$p_j = \sum_i x_i^j.$$
The  \emph{monomial symmetric function} $m_I$ is defined as the sum of all pairwise distinct monomials of the form $x_{\sigma_1}^{i_1}\cdots x_{\sigma_r}^{i_r}$.

\begin{theorem} \label{thm:mp}
For every $k\geq 1$ and every integer partition $I = (i_1,\ldots,i_r)$ of $k$,
$$m_I = \frac{1}{\alpha_1!\cdots \alpha_k!} \sum_{\mathcal{P}} (-1)^{r-\ell} c_{\mathcal{P}}  \, p_{J},$$
where the sum is over all partitions $\PP = \{P_1,\ldots,P_\ell\}$ of the set $\{1,2,\ldots,r\}$, the number $\alpha_j$ counts how many of $i_1,\ldots,i_r$ are equal to $j$, and $J = (j_1,\ldots,j_\ell)$ is given by
$$j_u = \sum_{v\in P_u} i_v,\quad 1\leq u\leq \ell.$$
\end{theorem}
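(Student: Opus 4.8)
The plan is to deduce this directly from the Möbius inversion identity \eqref{eq:mp} by specializing the abstract indeterminates $a_n$ to monomials in the $x_i$. Concretely, I take $S = \{1,2,\ldots,r\}$ and perform the substitution $a_n \mapsto x_n^{i_a}$ for each $a \in S$ and each positive integer $n$, so that for a subset $T\subseteq S$ one gets $f_T(n) = \prod_{a\in T} x_n^{i_a} = x_n^{\sum_{a\in T} i_a}$. I then apply \eqref{eq:mp} to the finest partition $\pi_0 = \{\{1\},\ldots,\{r\}\}$, for which every partition $\rho$ of $S$ satisfies $\rho \geq \pi_0$; the partitions $\rho \geq \pi_0$ are in bijection with the set partitions $\PP = \{P_1,\ldots,P_\ell\}$ of $\{1,\ldots,r\}$ via $\rho = \PP(\pi_0)$, with block $\rho_u = \bigcup_{v\in P_u}\{v\} = P_u$.

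Under this substitution the power-sum side comes out exactly right. For a partition $\rho = \PP(\pi_0)$ as above, the block $\rho_u = P_u$ gives $f_{\rho_u}(n) = x_n^{\sum_{v\in P_u} i_v} = x_n^{j_u}$, so that $p_\rho = \prod_{u=1}^\ell \big(\sum_n x_n^{j_u}\big) = p_{j_1}\cdots p_{j_\ell} = p_J$; in particular $p_{\pi_0} = p_{i_1}\cdots p_{i_r} = p_I$. The Möbius function is likewise immediate: since $\ell(\pi_0)=r$ and the number $b_u$ of $\pi_0$-blocks making up $\rho_u$ is just $|P_u|$, the formula for $\mu$ gives $\mu(\pi_0,\rho) = (-1)^{r-\ell} (|P_1|-1)!\cdots(|P_\ell|-1)! = (-1)^{r-\ell} c_\PP$.

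The one point that needs genuine care --- and the only place the factor $\alpha_1!\cdots\alpha_k!$ enters --- is the monomial side. By definition $m_{\pi_0} = \sum_{\text{distinct } n_1,\ldots,n_r} x_{n_1}^{i_1}\cdots x_{n_r}^{i_r}$, whereas $m_I$ is the sum of the pairwise \emph{distinct} monomials of this shape. I would argue that every monomial occurring in $m_I$ is produced exactly $\alpha_1!\cdots\alpha_k!$ times in $m_{\pi_0}$: two tuples of distinct indices yield the same monomial precisely when they differ by a permutation $\tau\in\Sigma_r$ with $i_{\tau(v)} = i_v$ for all $v$, and the number of such exponent-preserving permutations is $\prod_j \alpha_j!$. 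Hence $m_{\pi_0} = (\alpha_1!\cdots\alpha_k!)\, m_I$. Substituting all three identifications into \eqref{eq:mp} and dividing by $\alpha_1!\cdots\alpha_k!$ yields the claimed formula. I expect this counting of repeated parts to be the only real obstacle; the rest is bookkeeping to match the abstract framework of \eqref{eq:mp} to the symmetric-function notation.
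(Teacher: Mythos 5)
Your proof is correct and is essentially the paper's own argument: both specialize the M\"obius inversion identity \eqref{eq:mp} and identify $m_\pi$ with $\alpha_1!\cdots\alpha_k!\,m_I$ by the same counting of exponent-preserving permutations. The only (cosmetic) difference is that you work on the $r$-element set $\{1,\ldots,r\}$ with the substitution $a_n\mapsto x_n^{i_a}$ applied at the minimal partition, whereas the paper works on a $k$-element set with $a_n\mapsto x_n$ applied at a partition $\pi$ whose block sizes are $i_1,\ldots,i_r$; the interval $[\pi,\hat 1]$ there is isomorphic to your full partition lattice, so the two bookkeepings coincide.
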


\begin{proof}
Let $S$ be any set with $k$ elements. Perform the substitution $a_n = x_n$ for each $a\in S$ in the equality \eqref{eq:mp}
and note that this takes $p_\pi$ to $p_I$ and $m_\pi$ to $\alpha_1!\cdots \alpha_k!\, m_I$, where $I = \big(|\pi_1|,\ldots,|\pi_r|\big)$ is the integer partition underlying the set partition $\pi$ (assuming, as we may, $|\pi_1|\geq \cdots \geq |\pi_r|$).
\end{proof}

Next, we turn to the result that will specialize to our formula in Theorem \ref{thm:multiple eta}. Consider the following alternating version of $p_\pi$:
$$\pp_\pi = \sum_{n_1,\ldots,n_r} (-1)^{n_1+\cdots + n_r} f_{\pi_1}(n_1)\cdots f_{\pi_r}(n_r).$$
For an \emph{ordered} partition $\widetilde{\pi} = (\pi_1,\ldots,\pi_r)$ we define
$$\TT_{\widetilde{\pi}} = \sum_{n_1\geq_2 \cdots \geq_2 n_r} (-1)^{n_1+\cdots + n_r} f_{\pi_1}(n_1)\cdots f_{\pi_r}(n_r).$$
Then for an unordered partition $\pi = \{\pi_1, \ldots, \pi_r\}$, define
$$\TT_\pi^\Sigma = \sum_{\widetilde{\pi}} \TT_{\widetilde{\pi}},$$
where the sum is over the $r!$ ordered partitions $\widetilde{\pi}$ whose underlying unordered partition is $\pi$.

\begin{lemma} \label{lemma:length sum}
For every partition $\pi\in \Pi_S$,
$$\sum_{\rho\geq \pi} (-1)^{\ell(\rho)} \ell(\rho)! = (-1)^{\ell(\pi)}.$$
\end{lemma}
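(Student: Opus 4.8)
The plan is to reduce the identity to a statement depending only on the integer $\nn := \ell(\pi)$ and then to evaluate the resulting sum by an exponential generating function computation. First I would invoke the fact, recorded just before the lemma, that the coarsenings $\rho\geq\pi$ are in bijection with the partitions $\PP$ of the set $\{1,2,\ldots,\nn\}$ of blocks of $\pi$, via $\rho = \PP(\pi)$, and that under this bijection $\ell(\rho) = \ell(\PP)$. Hence the left-hand side depends on $\pi$ only through its length, and
$$\sum_{\rho\geq\pi}(-1)^{\ell(\rho)}\ell(\rho)! \;=\; \sum_{\PP}(-1)^{\ell(\PP)}\ell(\PP)! \;=:\; f(\nn),$$
the sum ranging over all partitions $\PP$ of $\{1,2,\ldots,\nn\}$. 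It then suffices to prove $f(\nn) = (-1)^{\nn}$.

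Next I would reinterpret $f(\nn)$ combinatorially. Grouping the partitions $\PP$ by their number of blocks $k$, the factor $\ell(\PP)!$ counts the orderings of the blocks, so $k!$ times the number of $k$-block partitions of $\{1,2,\ldots,\nn\}$ is exactly the number of ordered set partitions of $\{1,2,\ldots,\nn\}$ into $k$ nonempty blocks, equivalently the number $N(\nn,k)$ of surjections $\{1,2,\ldots,\nn\}\to\{1,2,\ldots,k\}$. Thus $f(\nn) = \sum_{k\geq 0}(-1)^k N(\nn,k)$, with the convention $N(0,0)=1$ so that $f(0)=1$.

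The computation itself I would carry out with exponential generating functions. A single nonempty block contributes $e^x-1$, so by the product rule for labelled structures $\sum_{\nn\geq 0}N(\nn,k)\,\frac{x^{\nn}}{\nn!} = (e^x-1)^k$. Summing over $k$ against the sign $(-1)^k$ gives
$$\sum_{\nn\geq 0} f(\nn)\,\frac{x^{\nn}}{\nn!} \;=\; \sum_{k\geq 0}(-1)^k(e^x-1)^k \;=\; \frac{1}{1+(e^x-1)} \;=\; e^{-x} \;=\; \sum_{\nn\geq 0}(-1)^{\nn}\,\frac{x^{\nn}}{\nn!},$$
and comparing coefficients of $x^{\nn}/\nn!$ yields $f(\nn) = (-1)^{\nn}$, as desired.

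The only point requiring care, and the one I would flag as the (minor) main obstacle, is the justification of the geometric summation over $k$: this is a manipulation of \emph{formal} power series, legitimate because $e^x-1$ has vanishing constant term, so that for each fixed $\nn$ only finitely many $k$ contribute to the coefficient of $x^{\nn}$. A reader preferring to avoid generating functions can instead use the recursion $N(\nn,k) = \sum_{j\geq 1}\binom{\nn}{j}N(\nn-j,k-1)$ (obtained by recording which elements form the last block) to deduce that $f$ satisfies $\sum_{j=0}^{\nn}\binom{\nn}{j}f(\nn-j) = 0$ for $\nn\geq 1$, with $f(0)=1$; since $(-1)^{\nn}$ satisfies the same relation, because $\sum_{j=0}^{\nn}\binom{\nn}{j}(-1)^{\nn-j} = (1-1)^{\nn} = 0$ for $\nn\geq 1$, induction on $\nn$ then gives $f(\nn) = (-1)^{\nn}$.
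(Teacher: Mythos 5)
Your proof is correct and follows the same outline as the paper's: both arguments first reduce to the case where $\pi$ is the minimal partition, by identifying the coarsenings $\rho\geq\pi$ with the partitions of the set of blocks of $\pi$, so that the left-hand side becomes the alternating sum $\sum_{k}(-1)^k k!\,S(n,k)$ of ordered set partition (surjection) counts with $n=\ell(\pi)$. The only divergence is in how this sum is evaluated: the paper specializes the identity $\sum_{k=1}^n S(n,k)(x)_k=x^n$ at $x=-1$, whereas you use the exponential generating function $(e^x-1)^k$ for surjections and a formal geometric series (or, in your alternative, a recursion on $n$); both routes are standard and valid, and your remark that $\sum_{k\geq 0}(-1)^k(e^x-1)^k$ converges coefficientwise because $e^x-1$ has no constant term is exactly the right point to flag.
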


\begin{proof}
We may without loss of generality assume that $\pi$ is the minimal element, because the poset $\set{\rho\in \Pi_S}{\rho \geq \pi}$ is isomorphic to the poset $\Pi_\pi$ of partitions of the set $\pi$. Then $n= \ell(\pi)$ is the number of elements of $S$. The number of partitions of length $k$ in $\Pi_S$ is equal to the Stirling number of the second kind $S(n,k)$, see e.g.~\cite[Example 3.10.4]{Stanley1}. Thus,
\begin{equation} \label{eq:stirling}
\sum_{\rho\in \Pi_S} (-1)^{\ell(\rho)} \ell(\rho)! = \sum_{k=1}^n (-1)^k S(n,k) k!.
\end{equation}
By plugging in $x=-1$ in the well-known identity
$$\sum_{k =1}^n S(n,k) (x)_k = x^n,$$
where $(x)_k = x(x-1)(x-2) \cdots (x-k+1)$, we see that \eqref{eq:stirling} equals $(-1)^n$.
\end{proof}

\begin{theorem} \label{thm:Tp}
For every partition $\pi$,
\begin{equation} \label{eq:Tp}
(-1)^{\ell(\pi)}\TT_\pi^\Sigma = \sum_{\rho \geq \pi} (-1)^{\ell(\rho)}\mu(\pi,\rho) \pp_\rho.
\end{equation}
\end{theorem}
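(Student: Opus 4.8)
The plan is to deduce the identity from its Möbius dual. The assertion has exactly the shape of a Möbius-inverted formula over the refinement poset $\Pi_S$: writing $F(\rho) = (-1)^{\ell(\rho)}\pp_\rho$ and $G(\pi) = (-1)^{\ell(\pi)}\TT_\pi^\Sigma$, the claim \eqref{eq:Tp} reads $G(\pi) = \sum_{\rho\geq\pi}\mu(\pi,\rho)F(\rho)$. By the same Möbius inversion over $\Pi_S$ used around \eqref{eq:mp}, this is \emph{equivalent} to the dual relation
\[
(-1)^{\ell(\pi)}\pp_\pi = \sum_{\rho\geq\pi}(-1)^{\ell(\rho)}\TT_\rho^\Sigma,
\]
so it suffices to prove this dual relation. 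I would establish it by expanding both sides as formal power series in the $a_n$ and comparing the coefficient of each monomial.

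Fix an assignment $v$ giving a positive integer $v(B)$ to each block $B$ of $\pi = \{B_1,\dots,B_r\}$, and consider the monomial $\prod_{j} f_{B_j}(v(B_j))$. Because distinct blocks involve disjoint sets of indeterminates, these monomials are pairwise distinct, and every monomial occurring on either side (all indices being constant on $\pi$-blocks) is of this form. Group the blocks into \emph{value classes} $V_1,\dots,V_t$ according to the common value $w_s$ assigned, with $m_s = |V_s|$, and let $V\in\Pi_S$ be the induced partition, so that $\pi\leq V$; set $\epsilon_s = (-1)^{w_s}$. The coefficient on the left-hand side is immediate: it equals $(-1)^{\ell(\pi)}(-1)^{\sum_j v(B_j)} = (-1)^r\prod_s \epsilon_s^{m_s}$.

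For the right-hand side, the monomial of $v$ occurs in $\TT_\rho^\Sigma$ precisely when $v$ is constant on the blocks of $\rho$, that is, for $\pi\leq\rho\leq V$; for such $\rho$ its coefficient is the number of orderings of the $\rho$-blocks into a $\geq_2$-chain, times the sign $(-1)^{\sum_u w(\rho_u)}$ (which is independent of the ordering). Since distinct value classes carry distinct values, the chain condition \emph{decouples} over the classes: within an even-valued class any order of its $\rho$-blocks is admissible, contributing $k!$ for $k$ blocks, whereas within an odd-valued class no two equal entries may be adjacent, so only a single $\rho$-block is permitted. Thus the whole coefficient factors as a product over value classes, and the sum over $\pi\leq\rho\leq V$ becomes an independent sum over partitions $Q$ of each class. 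The even-valued factor is $\sum_{Q}(-1)^{\ell(Q)}\ell(Q)!$, which equals $(-1)^{m_s}$ by Lemma \ref{lemma:length sum}; the odd-valued factor collapses to the single one-block term and equals $(-1)\cdot 1\cdot\epsilon_s = 1$. Multiplying, the right-hand coefficient is $\prod_{s:\,w_s\text{ even}}(-1)^{m_s}$, and a short sign bookkeeping shows this equals the left-hand coefficient $(-1)^r\prod_s\epsilon_s^{m_s}$, completing the proof.

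The delicate point throughout is the interplay of the $\geq_2$ ordering with the alternating signs: one must count admissible chain-orderings within each value class and keep the even/odd dichotomy straight, and one must reconcile the one-sign-per-block convention in $\pp_\rho$ with the sign attached to the merged indices in $\TT_\rho^\Sigma$. It is precisely the even classes that produce the sum $\sum_Q (-1)^{\ell(Q)}\ell(Q)!$, for which Lemma \ref{lemma:length sum} is tailor-made, while the odd classes are forced to be trivial by the chain condition; isolating these two phenomena cleanly is where the real content lies.
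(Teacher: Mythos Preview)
Your proof is correct and follows essentially the same route as the paper's: reduce via M\"obius inversion to the dual identity, compare coefficients of basic terms on both sides, observe that the interval $[\pi,V]$ factors over value classes so that odd classes are forced to a single block while even classes produce the sum $\sum_Q(-1)^{\ell(Q)}\ell(Q)!$, and finish with Lemma~\ref{lemma:length sum}. The only cosmetic difference is that you index by individual monomials (value assignments $v$) whereas the paper groups monomials into the sums $\nn_{\nu,e}$ indexed by a coarsening $\nu\geq\pi$ together with a parity vector $e$; since the coefficients depend only on the value-class partition and the parities, your $V$ and $\epsilon$ are exactly the paper's $\nu$ and $e$, and the arguments coincide.
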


\begin{proof}
By M\"obius inversion, the equality is equivalent to
\begin{equation} \label{eq:pT}
(-1)^{\ell(\pi)} \pp_\pi = \sum_{\rho \geq \pi} (-1)^{\ell(\rho)} \TT_\rho^\Sigma,
\end{equation}
and we proceed to prove \eqref{eq:pT}. It is clear that both sides can be written as linear combinations of series of the form
$$\nn_{\nu,e} = \sum_{\stackrel{n_1,\ldots,n_m\,\, distinct}{n_i \equiv_2 e_i}} f_{\nu_1}(n_1)\cdots f_{\nu_m}(n_m),$$
for various $\nu = \{\nu_1,\ldots,\nu_m\} \geq \pi$, where $e$ is an assignment of a parity $e_i \in\{0,1\}$ to each $\nu_i$. For example, if $\nu= \{\{a,b\},\{c\}\}$ and $e$ assigns $1$ to $\{a,b\}$ and $0$ to $\{c\}$, then
$$\nn_{\nu,e} = \sum_{n_1 \, odd,\,\, n_2 \, even} a_{n_1}b_{n_1} c_{n_2}.$$
The question is with what coefficients $\nn_{\nu,e}$ will appear in the respective sides of \eqref{eq:pT}.
For the left hand side this is not difficult: $\nn_{\nu,e}$ appears in $(-1)^{\ell(\pi)} \pp_\pi$ with coefficient
\begin{equation} \label{eq:pn}
\sgn(\pi,\nu,e) = (-1)^{v_1 (e_1-1) + \cdots + v_m (e_m-1)},
\end{equation}
where $v_i$ is the number of $\pi$-blocks in $\nu_i$.

The right hand side requires a little more effort --- and notation. It is clear that $\nn_{\nu,e}$ appears in $\TT_\rho^\Sigma$ only if $\nu \geq \rho$ and $e$ assigns an even value to $\nu_i$ whenever $\nu_i$ consists of more than one $\rho$-block. This can be reformulated as saying that $\nu\geq \rho \geq e(\nu)$, where $e(\nu)\leq \nu$ is the partition that keeps $\nu_i$ intact if $e_i$ is odd and splits $\nu_i$ completely if $e_i$ is even. Or more precisely, $e(\nu)$ is the smallest element below $\nu$ that contains $\nu_i$ whenever $e_i$ is odd.
Since we symmetrize, there will be repetitions; for $\nu\geq \rho\geq e(\nu)$, the term involving $\nn_{\nu,e}$ will be repeated $b_{\rho,\nu} = b_1! \cdots b_m!$ times in $\TT_\rho^\Sigma$, where $b_i$ is the number of $\rho$-blocks in $\nu_i$. Thus, the coefficient of $\nn_{\nu,e}$ in $(-1)^{\ell(\rho)} \TT_\rho^\Sigma$ is $\sgn(\rho,\nu,e)b_{\rho,\nu}$. It follows that the coefficient of $\nn_{\nu,e}$ in $\sum_{\rho\geq \pi}  (-1)^{\ell(\rho)} \TT_\rho^\Sigma$ is
\begin{equation} \label{eq:Tn}
\sum_{\nu \geq \rho \geq e(\nu) \vee \pi} \sgn(\rho,\nu,e)b_{\rho,\nu} = \sum_{\nu \geq \rho \geq e(\nu) \vee \pi}  (-1)^{b_1(e_1-1) + \cdots + b_m(e_m-1)} b_1!\cdots b_m!,
\end{equation}
where $e(\nu)\vee \pi$ is the least upper bound of $e(\nu)$ and $\pi$. 

Put $\pi_{(i)}=\{\pi_j \in \pi : \pi_j \subseteq \nu_i\}$ and $\nu_{(i)} = \{\nu_i\}$. We then have an isomorphism of posets $[e(\nu) \vee \pi,\nu] \cong \prod_{i : e_i = 0} [\pi_{(i)},\nu_{(i)}]$. Under this isomorphism $\rho \in [e(\nu) \vee \pi,\nu] $ is sent to $\rho_{(i)}=\{\rho_j \in \rho : \rho_j \subseteq \nu_i\} \in [\pi_{(i)},\nu_{(i)}]$. Note that $b_i$ is the length of $\rho_{(i)}$. We now find that the sum \eqref{eq:Tn} factors as a product
$$\prod_{i : e_i = 0} \sum_{\nu_{(i)}\geq \rho_{(i)}\geq \pi_{(i)}} (-1)^{b_i}b_i!.$$
By Lemma \ref{lemma:length sum}, this is equal to
$$\prod_{i : e_i = 0}  (-1)^{v_i}.$$
This shows that  \eqref{eq:Tn} equals \eqref{eq:pn}, and the theorem is proved.
\end{proof}

To prove Theorem \ref{thm:multiple eta}, take $S=\{1,2,\ldots,r\}$ and substitute $a_n$ by $\frac{1}{n^{s_a}}$ in \eqref{eq:Tp}.

\subsection*{Acknowledgments}
We thank Don Zagier and Matthias Kreck for valuable comments.
The impetus for this work was a question from Oscar Randal-Williams to the first author about certain points in \cite{BM}. The first author was supported by the Swedish Research Council through grant no.~2015-03991.

\end{document}